\newif\ifdraft
\newcommand{\rnote}[1]{}
\newcommand{\ynote}[1]{}
\newcommand{\xnote}[1]{}
\newcommand{\rnote}[1]{\footnote{\color{blue}Ryan: {#1}}}
\newcommand{\ynote}[1]{\footnote{\color{red}Yuval: {#1}}}
\newcommand{\xnote}[1]{\footnote{\color{green}Xinyu: {#1}}}
\newcommand{\hamlevel}{\kappa}
\newcommand{\colors}{{\ell}}
\newcommand{\slice}[1]{\calU_{#1}}
\newcommand{\transpos}[1]{\mathrm{Trans}(#1)}
\DeclarePairedDelimiterX\diverg[2]{(}{)}{#1 \mathrel{}\mathclose{}\delimsize\|\mathopen{}\mathrel{} #2}
\begin{document}

\title{FKN theorem for the multislice, with applications}
\author{Yuval Filmus\thanks{Technion Computer Science Department. \texttt{yuvalfi@cs.technion.ac.il}. Taub Fellow --- supported by the Taub Foundations. The research was funded by ISF grant 1337/16.}}

\maketitle

\begin{abstract}
The Friedgut--Kalai--Naor (FKN) theorem states that if $f$ is a Boolean function on the Boolean cube which is close to degree~1, then $f$ is close to a \emph{dictator}, a function depending on a single coordinate. The author has extended the theorem to the \emph{slice}, the subset of the Boolean cube consisting of all vectors with fixed Hamming weight. We extend the theorem further, to the \emph{multislice}, a multicoloured version of the slice.

As an application, we prove a stability version of the edge-isoperimetric inequality for settings of parameters in which the optimal set is a dictator.
\end{abstract}

\section{Introduction}

The classical Friedgut--Kalai--Naor (FKN) theorem~\cite{FKN02} is a basic structural result in Boolean Function Analysis. It is a stability version of the following trivial result: the only Boolean functions on the Boolean cube $\{0, 1\}^n$ which have degree~1 are dictators, that is, functions depending on a single coordinate. The FKN theorem can be stated in two equivalent ways:
\begin{enumerate}
\item If $f\colon \{0,1\}^n \to \{0,1\}$ is $\epsilon$-close to degree~1, that is, $\|f^{>1}\|^2 = \epsilon$, then $f$ is $O(\epsilon)$-close to a Boolean dictator, that is, $\Pr[f \neq g] = O(\epsilon)$ for some Boolean dictator $g\colon \{0,1\}^n \to \{0,1\}$.
\item If $f\colon \{0,1\}^n \to \R$ is a degree~1 function which is $\epsilon$-close to Boolean, that is, $\E[\dist(f,\{0,1\})^2] = \epsilon$, then $f$ is $O(\epsilon)$-close to a Boolean dictator, that is, $\E[(f-g)^2] = O(\epsilon)$ for some Boolean dictator $g\colon \{0,1\}^n \to \{0,1\}$.
\end{enumerate}
In fact, using hypercontractivity, the error bound can be improved from $O(\epsilon)$ to $\epsilon + O(\epsilon^2)$.\ynote{Or indeed, to $\epsilon + O(\epsilon^d)$ for any $d$.}

The FKN theorem has been extended to many other domains: to graph products~\cite{ADFS03}, to the biased Boolean cube~\cite{JOW12,Nay14}, to sums of functions on disjoint variables~\cite{Rub12}, and to non-product domains: the symmetric group~\cite{EFF15a,EFF15b} and the slice~\cite{Fil16b}.

In this paper we extend it to the multislice, a generalization of the slice recently considered by the authors~\cite{FOW18a}.

\smallskip

Given positive integers $\hamlevel_1,\ldots,\hamlevel_\colors$ summing to $n$, the \emph{multislice} $\slice{\hamlevel}$ consists of all vectors in $[\colors]^n$ in which the number of coordinates equal to~$i$ is $\hamlevel_i$. When $\colors = 2$, this is just the slice, and when $\colors = n$, we obtain the symmetric group. In this paper, we focus on the case in which $\colors$ is constant, and furthermore the multislice is unbiased: $\hamlevel_1,\ldots,\hamlevel_n \geq \rho n$ for some constant $\rho > 0$. The biased case, in which the weights $\hamlevel_1,\ldots,\hamlevel_\colors$ are allowed to become arbitrarily small, is more difficult, since in this case the approximating function need not be a dictator; see~\cite{Fil16b} for more details.

In order to formulate the FKN theorem for the multislice, we need to generalize the concept of degree~1 function. There are several different routes to this generalization, all yielding the same class of functions:
\begin{enumerate}
\item Representation theory of the symmetric group: The multislice can be viewed as a permutation module of $S_n$. The representation theory of $S_n$ decomposes the space of functions on $S_n$ to isotypical components indexed by partitions of~$n$, which are partially ordered according to majorization. In the case of the slice, the degree~$d$ functions are those supported on the isotypical components corresponding to partitions in which the first part contains at least $n-d$ boxes. We can use the same definition on the mutlislice.
\item Polynomial degree: We can view the input to a function on the multislice as consisting of Boolean variables $x_{ji} = 1_{u_j=i}$. A function on the Boolean cube or on the slice has degree~$d$ if it can be represented as a polynomial of degree~$d$ over these variables. This definition carries over to the multislice.
\item Junta degree: A function on the Boolean cube or on the slice has degree~$d$ if it is a linear combination of $d$-juntas, that is, functions depending on $d$ coordinates. The same definition works on the multislice.
\end{enumerate}

Armed with the concept of degree~1 function, we can state our main theorem.

\begin{theorem} \label{thm:main}
Fix an integer $\colors \geq 2$ and a parameter $\rho > 0$. There exists a constant $N = N(\colors,\rho)$ for which the following hold. Let $\hamlevel_1,\ldots,\hamlevel_\colors \geq \rho n$ be integer weights summing to $n \geq N$.

If $f\colon \slice{\hamlevel} \to \R$ is a degree~1 function which satisfies $\E[\dist(f,\{0,1\})^2] = \epsilon$, then there exists a Boolean function $g\colon \slice{\hamlevel} \to \{0,1\}$, depending on a single coordinate, such that $\E[(f-g)^2] \leq \epsilon + O_{\colors,\rho}(\epsilon^2)$.

If $F\colon \slice{\hamlevel} \to \{0,1\}$ satisfies $\|F^{>1}\|^2 = \epsilon$ then there exists a Boolean function $g\colon \slice{\hamlevel} \to \{0,1\}$, depending on a single coordinate, such that $\Pr[F \neq g] \leq 4\epsilon + O_{\colors,\rho}(\epsilon^2)$.
\end{theorem}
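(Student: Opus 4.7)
The plan is to adapt the slice FKN argument of \cite{Fil16b} by exploiting the symmetric-group action on the multislice, and to invoke the multislice hypercontractivity of \cite{FOW18a} to reach the sharp leading constants. I would first prove the real-valued statement and then derive the Boolean-valued statement from it via a standard triangle-inequality reduction.

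First I would put $f$ in canonical form. Since $f$ has degree $1$ in the variables $x_{ji} = \mathbf{1}[u_j = i]$, and since exactly one $x_{ji}$ equals $1$ for each $j$, the function $f$ can be written uniquely as
\[
f(u) = c + \sum_{j=1}^n g_j(u_j), \qquad g_j \colon [\colors] \to \field,
\]
after fixing a gauge such as $\sum_j g_j(i) = 0$ for every $i \in [\colors]$. The goal is to show that all but one of the $g_j$'s are essentially constant and that the exceptional $g_{j^*}$ takes values close to $\mathbf{1}[\cdot = i^*]$ for a single color $i^*$. To extract this structure I would use the transposition trick: for any positions $j \neq j'$, the transposition $\tau_{j,j'}$ preserves the multislice, and
\[
f(u) - f(\tau_{j,j'} u) = h_{jj'}(u_j) - h_{jj'}(u_{j'}), \qquad h_{jj'} := g_j - g_{j'}.
\]
Since $f$ is close to $\{0,1\}$, the left-hand side is close to an integer for most $u$; averaging $\E[\dist(f,\{0,1\})^2]$ over the pair-marginal of $(u_j, u_{j'})$ and summing over all pairs of positions and of colors $i \neq i'$ translates the hypothesis into quantitative control that each difference $h_{jj'}(i) - h_{jj'}(i')$ is close to $\{-1,0,+1\}$. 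Using the unbiasedness $\hamlevel_i \geq \rho n$ for every color, a double-counting argument pinpoints a unique coordinate $j^*$ at which $g_{j^*}$ is non-constant and, within row $j^*$, a unique color $i^*$ at which $g_{j^*}$ attains its large value. This yields the candidate dictator $g(u) = \mathbf{1}[u_{j^*} = i^*]$ with a crude error $\E[(f-g)^2] = O_{\colors,\rho}(\epsilon)$.

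To sharpen $O(\epsilon)$ to $\epsilon + O_{\colors,\rho}(\epsilon^2)$, let $h := f - g$, so that $\|h\|_2^2 = O_{\colors,\rho}(\epsilon)$ and $h$ has degree $1$. On the event $\{|h| \leq 1/2\}$ one has $\dist(f,\{0,1\})^2 = h^2$ pointwise, while the complementary event has measure at most $O_{\colors,\rho}(\|h\|_2^4)$ by the multislice hypercontractivity of \cite{FOW18a} applied to the fourth moment of $h$; together these give $|\epsilon - \|h\|_2^2| = O_{\colors,\rho}(\|h\|_2^4)$, and hence $\E[(f-g)^2] = \|h\|_2^2 = \epsilon + O_{\colors,\rho}(\epsilon^2)$. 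The Boolean statement follows by setting $f := F^{\leq 1}$ in the first part: Parseval gives $\E[(F-f)^2] = \epsilon$ and $\E[\dist(f,\{0,1\})^2] \leq \epsilon$, yielding $g$ with $\E[(f-g)^2] \leq \epsilon + O(\epsilon^2)$, and since $F, g \in \{0,1\}$,
\[
\Pr[F \neq g] = \E[(F-g)^2] \leq 2\E[(F-f)^2] + 2\E[(f-g)^2] \leq 4\epsilon + O(\epsilon^2).
\]

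The hardest step will be the combinatorial identification of the unique heavy pair $(j^*, i^*)$: unlike the slice, each $g_j$ carries $\colors - 1$ degrees of freedom and \emph{near-ties} can occur both across different coordinates and across different colors within the same coordinate. Ruling these out while losing neither a multiplicative factor in $\epsilon$ nor more than a $(\colors,\rho)$-dependent additive constant will be delicate, and I expect the multislice hypercontractivity of \cite{FOW18a} to be essential there: it controls the tails of the low-degree function $h$ strongly enough to forbid pathological near-tie configurations and to absorb the cross-color contributions into a single $O(\epsilon^2)$ remainder.
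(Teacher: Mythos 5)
Your high-level outline matches the paper's for the easy steps, but it misses the paper's central technique and makes a structural error in the isolation step.

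The sharpening step (obtaining $\epsilon + O(\epsilon^2)$ via hypercontractivity of the multislice) and the reduction from the Boolean statement to the real-valued one (via the L2 triangle inequality and $f := F^{\leq 1}$) are essentially identical to the paper's, and are correct. The canonical-form and transposition-trick observations also align with what the paper does in the "isolating the dictatorial coordinate" subsection, and your observation that cross-coordinate differences of coefficients must lie near $\{0, \pm 1\}$ is the same first move as the paper's.

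The gap is in what the transposition data lets you conclude. You assert that there is a unique coordinate $j^*$ where $g_{j^*}$ is non-constant and that within it a unique color $i^*$ carries the ``large'' value, so that the approximating dictator is exactly $\mathbf{1}[u_{j^*}=i^*]$. This is too strong: the theorem allows $g$ to be any $\{0,1\}$-valued function of a single coordinate, so the correct target is $g(u)=\mathbf{1}[u_{j^*}\in S]$ for some $S \subseteq [\colors]$, including $S=\emptyset$ or $S=[\colors]$ (constant $g$). For example, a function close to $\mathbf{1}[u_{j^*}\in\{1,2\}]$ has two colors at which $g_{j^*}$ takes its large value, and a function close to a constant has no non-constant $g_j$ at all, directly contradicting your uniqueness claims. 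The transposition trick only constrains differences $c_{ji}-c_{j'i}$; it gives no direct control on which set of colors accumulates a value near $1$ at the exceptional coordinate, nor on the constant offset $c$.

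The paper resolves exactly this difficulty by an induction on the number of colors, which is the one genuinely novel piece of machinery and which your plan omits. The base case ($\colors=2$) is the slice FKN of~\cite{Fil16b}. The inductive step works in two stages: (i) a lemma showing that if all pairwise influences are small, then the inductively-available dictator must be a constant in $\{0,1\}$; (ii) a ``constant pieces'' argument that fixes the exceptional coordinate to each color $I$, further restricts to collapse to $\colors$ colors, applies the constancy lemma to each restriction, and then shows (via a degree-one analysis of the map $S\mapsto\E[f_{I,S}]$) that the resulting constants $C_{I,S}$ concentrate on a single $C_I\in\{0,1\}$. The final dictator is $g(u)=C_{u_n}$, i.e., of the form $\mathbf{1}[u_n\in S]$. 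This is precisely the flexibility your direct combinatorial route does not provide, and you yourself flag the combinatorial identification as the unresolved ``delicate'' step. Without the inductive structure (or some replacement for it) there is no mechanism in the proposal to establish that the coefficients at the exceptional coordinate are close to an arbitrary $\{0,1\}$-vector rather than a single indicator, so the argument as written cannot be completed.
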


(The definition of $F^{>1}$ appears at the end of \Cref{sec:spectral}.)

\subsection{Application to edge isoperimetry}

Let $A$ be an arbitrary subset of the multislice $\slice{\hamlevel}$. The (edge) \emph{expansion} of $A$ is
\[
 \Phi(A) = \Pr_{\substack{\bu \sim A \\ \btau \sim \transpos{n}}}[\bu^\btau \not \in A],
\]
where $\bu$ is a random point chosen from $A$, $\btau = (j_1\; j_2)$ is a random transposition in $S_n$, and $\bu^\btau$ is obtained from $\bu$ by switching the values of $\bu_{j_1}$ and $\bu_{j_2}$. In words, the expansion of $A$ is the probability that if we choose a random point of $A$ and switch two of its coordinates at random, we reach a point not in $A$.

The edge-isoperimetry question is the following:

\begin{center}
\emph{Given $0 < \alpha < 1$, which sets of size $\alpha |\slice{\hamlevel}|$ minimize the expansion?}
\end{center}

When $\alpha n = \sum_{i \in S} \hamlevel_i$ for some $S \subseteq [\colors]$, it is natural to conjecture that the sets of the form $A = \{ u : u_j \in S \}$ minimize the expansion, and this is indeed the case. Using our FKN theorem, we are able to show a stability version of this result: if a set of size $\alpha n$ has almost minimal expansion, then it is close to a set with minimal expansion.

\subsection*{Preliminaries} We use $\E$ to denote expectation. The distance of an element $x$ to a set $S$ is $\dist(x,S) = \min_{y \in S} |x-y|$. For a set $S$, the notation $S \pm \epsilon$ stands for $\{ x : \dist(x,S) \leq \epsilon \}$. A function is \emph{Boolean} if it is $\{0,1\}$-valued. The L2 triangle inequality is the inequality $(a+b)^2 \leq 2(a^2+b^2)$.

Let $\hamlevel_1,\ldots,\hamlevel_\colors$ be positive integers summing to $n$. The \emph{multislice} $\slice{\hamlevel}$ consists of all vectors $u \in [\colors]^n$ in which the number of coordinates equal to~$i$ is $\hamlevel_i$, for all $i \in [\colors]$. The multislice is \emph{$\rho$-balanced} if $\hamlevel_1,\ldots,\hamlevel_\colors \geq \rho n$.

We endow the multislice with the uniform measure. If $f$ is a function on the multislice, then its L2 norm is $\|f\| = \sqrt{\E[f^2]}$. We say that two functions $f,g$ are $\epsilon$-close if $\|f-g\|^2 \leq \epsilon$.

We can think of a function on the multislice as being defined over the set of Boolean variables $(x_{ji})_{\substack{j \in [n] \\ i \in [\colors]}}$, which encode an element $u \in \slice{\hamlevel}$ in the following way: $x_{ji} = 1$ if $u_j = i$. Thus $\sum_{i=1}^\colors x_{ji} = 1$ for all $j \in [n]$, and $\sum_{j=1}^n x_{ji} = \hamlevel_i$ for all $i \in [\colors]$. (When $\colors = n$, the multislice is the symmetric group $S_n$, and the $x_{ji}$ are the entries of the permutation matrix representing the input permutation.) 

Since $x_{j\colors} = 1 - \sum_{i=1}^{\colors-1} x_{ji}$, we don't need to include $x_{1\colors},\ldots,x_{n\colors}$ explicitly as inputs. This is the usual convention in the case of the slice ($\colors = 2$), in which the input consists of just $n$ Boolean variables $x_1,\ldots,x_n$.

\section{Degree 1 functions} \label{sec:degree-1}

In this section we propose several different definitions of degree~1 functions, and show that they are all equivalent. While similar results hold for degree~$d$ functions for arbitrary~$d$, we concentrate here on the case $d = 1$.

Throughout the section, we fix a multislice $\slice{\hamlevel}$ on $n$ points and $\colors \geq 2$ colours. 

\subsection{Spectral definition} \label{sec:spectral}

A partition of $n$ is a non-increasing sequence of positive integers summing to $n$. We represent a partition as a finite sequence, or as an infinite sequence $(\lambda_i)_{i=1}^\infty$ where all but finitely many entries are zero. We can think of $\hamlevel$ as a partition of $n$ by sorting it accordingly.
We say that a partition $\lambda$ majorizes a partition $\mu$, in symbols $\lambda \succeq \mu$, if $\lambda_1 + \cdots + \lambda_i \geq \mu_1 + \cdots + \mu_i$ holds for all $i \geq 1$. 

The multislice $\slice{\hamlevel}$ can be viewed as a permutation module of the symmetric group. The representation theory of the symmetric group gives an orthogonal decomposition of the vector space of real-valued functions on the multislice:
\[
 \R^{\slice{\hamlevel}} = \bigoplus_{\lambda \succeq \hamlevel} V^\lambda,
\]
where $\lambda$ goes over all partitions of $n$ majorizing $\hamlevel$. Furthermore, it is known that $V^{(n)}$ consists of all constant functions, and $V^{(n-1,1)}$ is spanned by functions of the form $x_{ji_1} - x_{ji_2}$~\cite[Chapter 2]{Sagan}.

\begin{definition} \label{def:spectral-1}
A function on the multislice has \emph{spectral degree one} if it lies in $V^{(n)} \oplus V^{(n-1,1)}$.
\end{definition}

The orthogonal decomposition corresponds to the level decomposition of functions on the Boolean cube. In particular, we will use the following notations, for a function $f$ on the multislice:

\begin{enumerate}
\item $f^{=0}$ is the projection of $f$ to $V^{(n)}$.
\item $f^{=1}$ is the projection of $f$ to $V^{(n-1,n)}$.
\item $f^{\leq 1} = f^{=0} + f^{=1}$, and $f^{>1} = f - f^{\leq 1}$.
\end{enumerate}

Since $V^{(n)}$ consists of all constant functions, $f^{=0}$ is the constant function $\E[f]$.

\subsection{Polynomial definition} \label{sec:polynomial}

We can view the multislice as a function in the Boolean variables $x_{ji}$, where $j$ ranges over $[n]$ and $i$ ranges over $[\colors]$, given by $x_{ji} = 1_{u_j = i}$.

\begin{definition} \label{def:poly-1}
A function on the multislice has \emph{polynomial degree one} if it can be represented as a polynomial of degree at most~$1$ in the variables $x_{ji}$.
\end{definition}

Note that since $x_{j\colors} = 1 - \sum_{i=1}^{\colors-1} x_{ji}$, we can assume that the variables $x_{1\colors},\ldots,x_{n\colors}$ do not appear in the polynomial representation.

\begin{lemma} \label{lem:spectral-poly}
A function on the multislice has spectral degree one iff it has polynomial degree one.
\end{lemma}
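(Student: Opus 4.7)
The plan is to prove both implications by exploiting the description of $V^{(n-1,1)}$ as the span of the difference functions $x_{ji_1}-x_{ji_2}$, together with the linear identity $\sum_{i'=1}^{\colors} x_{ji'} = 1$ that holds on the multislice. Both directions reduce to expressing one natural basis in terms of the other.

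For the forward direction (spectral degree one implies polynomial degree one), suppose $f \in V^{(n)} \oplus V^{(n-1,1)}$. Write $f = c + g$ where $c$ is constant and $g \in V^{(n-1,1)}$. Since $V^{(n-1,1)}$ is spanned by the functions $x_{ji_1} - x_{ji_2}$, each of which is itself a polynomial of degree~$1$ in the $x_{ji}$, the function $g$ is a linear combination of such polynomials and so has polynomial degree at most~$1$; adding the constant $c$ leaves a polynomial of degree at most~$1$.

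For the reverse direction (polynomial degree one implies spectral degree one), suppose $f$ is representable as $f = c + \sum_{j \in [n], i \in [\colors]} a_{ji} x_{ji}$. Since $V^{(n)} \oplus V^{(n-1,1)}$ is a linear subspace and already contains every constant, it suffices to show that each individual variable $x_{ji}$ lies in $V^{(n)} \oplus V^{(n-1,1)}$. Using $\sum_{i'=1}^{\colors} x_{ji'} = 1$, I can write
\[
 x_{ji} \;=\; \frac{1}{\colors}\sum_{i'=1}^{\colors} x_{ji'} \;+\; \frac{1}{\colors}\sum_{i'=1}^{\colors} (x_{ji} - x_{ji'}) \;=\; \frac{1}{\colors} \;+\; \frac{1}{\colors}\sum_{i'=1}^{\colors} (x_{ji} - x_{ji'}),
\]
which exhibits $x_{ji}$ as a constant (in $V^{(n)}$) plus a linear combination of generators of $V^{(n-1,1)}$. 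Hence $f$ lies in $V^{(n)} \oplus V^{(n-1,1)}$.

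There is essentially no obstacle: the only mild subtlety is that a polynomial representation in the variables $x_{ji}$ is not unique (for instance one could trade $x_{j\colors}$ for $1 - \sum_{i<\colors} x_{ji}$), but the argument only requires the \emph{existence} of some degree-$1$ representation, and the spectral subspace contains every variable $x_{ji}$ regardless of which representative is chosen. The whole proof is therefore a direct translation between the spanning set given in \Cref{sec:spectral} and the polynomial generators used in \Cref{def:poly-1}.
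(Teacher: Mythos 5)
Your proof is correct and follows essentially the same route as the paper's: the forward direction is identical, and the reverse direction uses the same key identity $\sum_{i'} x_{ji'} = 1$ to rewrite each variable as a constant plus a combination of the difference functions spanning $V^{(n-1,1)}$. The only cosmetic difference is that you establish membership one variable $x_{ji}$ at a time and then invoke linearity, whereas the paper centers the coefficients $c_{ji}$ about their mean $c_j$ and rewrites the whole sum at once; both are the same centering trick, and your modular phrasing is arguably a touch cleaner.
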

\begin{proof}
If a function has spectral degree one then it is an affine combination of functions of the form $x_{ji_1} - x_{ji_2}$, and so it has polynomial degree one.

Conversely, suppose that $f$ has polynomial degree one, so that
\[
 f = c + \sum_{j=1}^n \sum_{i=1}^\colors c_{ji} x_{ji}.
\]
Let $c_j = \sum_{i=1}^\colors c_{ji}/\colors$ for all $j \in [n]$.
Since $\sum_{i=1}^\colors x_{ji} = 1$, we have
\[
 f = c + \sum_{j=1}^n \colors c_j + \sum_{j=1}^n \sum_{i=1}^\colors (c_{ji} - c_j) x_{ji}.
\]
By construction, $\sum_{i=1}^\colors (c_{ji} - c_j) = 0$, and so $c_{j\colors} - c_j = -\sum_{i=1}^{\colors-1} (c_{ji} - c_j)$. Therefore
\[
 f = c + \sum_{j=1}^n \colors c_j + \sum_{j=1}^n \sum_{i=1}^{\colors-1} (c_{ji} - c_j) (x_{ji} - x_{j\colors}). 
\]
This shows that $f$ has spectral degree one.
\end{proof}

\subsection{Junta definition} \label{sec:junta}

A \emph{dictator} is a function depending on a single coordinate. This also includes constant functions.

\begin{definition} \label{def:junta-1}
A function on the multislice has \emph{junta degree one} if it can be represented as a linear combination of dictators.
\end{definition}

\begin{lemma} \label{lem:poly-junta}
A function on the multislice has polynomial degree one iff it has junta degree one.
\end{lemma}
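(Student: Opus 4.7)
The plan is to prove both directions by directly inspecting the definitions, since each is essentially a reformulation of the other.

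For the forward direction (polynomial degree one implies junta degree one), I would start from a representation $f = c + \sum_{j,i} c_{ji} x_{ji}$. The constant $c$ is trivially a (constant) dictator, and each indicator $x_{ji} = 1_{u_j = i}$ depends only on the coordinate $u_j$, hence is a dictator in the sense of \Cref{def:junta-1}. So $f$ is exhibited as a linear combination of dictators.

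For the reverse direction (junta degree one implies polynomial degree one), it suffices to show that any single dictator $g$ has polynomial degree one, since polynomial degree one is preserved under linear combinations. A dictator depends on one coordinate $u_j \in [\colors]$, so it is determined by its $\colors$ values $g(1),\ldots,g(\colors)$. Using the indicators of these values, we can write
\[
 g(u) = \sum_{i=1}^{\colors} g(i) \cdot 1_{u_j = i} = \sum_{i=1}^{\colors} g(i)\, x_{ji},
\]
which is a polynomial of degree one in the variables $x_{ji}$. A constant dictator is handled by the constant term. Extending linearly to a sum of dictators completes the argument.

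There is no real obstacle here; the equivalence is essentially by inspection, because the monomials $x_{ji}$ of degree one are precisely the atomic dictators (indicators of a single value at a single coordinate), and any dictator decomposes as a linear combination of such atoms. The only mild point to observe is that one does \emph{not} need the relation $\sum_i x_{ji} = 1$ for this lemma (that relation was only needed in \Cref{lem:spectral-poly} to pass to the spectral form), so the proof is cleaner and shorter than the previous one.
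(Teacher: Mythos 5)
Your proof is correct and matches the paper's argument essentially verbatim: in both directions you observe that the monomials $x_{ji}$ are dictators, and conversely that any dictator depending on coordinate $j$ decomposes as $\sum_{i=1}^{\colors} c_i x_{ji}$. Nothing to add.
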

\begin{proof}
The functions $1,x_{ji}$ are dictators, and so if a function has polynomial degree one then it has junta degree one. Conversely, if $f$ depends only on the $j$th coordinate then $f = \sum_{i=1}^\colors c_i x_{ji}$ for some constants $c_1,\dots,c_\colors$, and so $f$ has polynomial degree one. Therefore a function having junta degree one also has polynomial degree one.
\end{proof}

In view of \Cref{lem:spectral-poly} and \Cref{lem:poly-junta}, we define a function on the multislice to have degree one if it satisfies any of the definitions given above.

\subsection{Normal form} \label{sec:normal-form}

We close this section by describing a normal form for degree one functions.

\begin{lemma} \label{lem:normal-form-1}
Every degree one function on the multislice has a unique representation of the form
\[
 f = c + \sum_{j=1}^n \sum_{i=1}^\colors c_{ji} x_{ji},
\]
where $\sum_{i=1}^\colors c_{ji} = 0$ for all $j \in [n]$ and $\sum_{j=1}^n c_{ji} = 0$ for all $i \in [\colors]$.
\end{lemma}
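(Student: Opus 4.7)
The plan is to prove existence by two successive normalizations of a polynomial representation, and uniqueness by a short symmetry argument.

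For existence, invoke \Cref{def:poly-1} and the equivalence established in this section to write $f = c + \sum_{j,i} c_{ji} x_{ji}$. First exploit the identity $\sum_i x_{ji} = 1$: let $m_j = \frac{1}{\colors}\sum_i c_{ji}$, and replace each $c_{ji}$ by $c_{ji} - m_j$ while absorbing $\sum_j m_j$ into $c$. All row sums are now zero, and crucially this forces $\sum_{j,i} c_{ji} = 0$, so the column sums $D_i := \sum_j c_{ji}$ themselves sum to zero. Next exploit $\sum_j x_{ji} = \hamlevel_i$: set $\mu_i = D_i/n$ and replace $c_{ji}$ by $c_{ji} - \mu_i$, absorbing $\sum_i \mu_i \hamlevel_i$ into $c$. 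This zeros the column sums, and because $\sum_i \mu_i = 0$ the row sums stay at zero, yielding a normal form.

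For uniqueness, the difference of two normal-form representations has the shape $\delta + \sum_{j,i} \delta_{ji} x_{ji} \equiv 0$ on $\slice{\hamlevel}$, with $(\delta_{ji})$ satisfying both sum conditions. Taking expectations and using $\E[x_{ji}] = \hamlevel_i/n$ together with the column-sum condition immediately gives $\delta = 0$. To kill the linear part, evaluate the (zero) function at some $u \in \slice{\hamlevel}$ and at its image under a transposition $(j_1\ j_2)$; subtracting yields
\[
 \delta_{j_1, a} + \delta_{j_2, b} = \delta_{j_1, b} + \delta_{j_2, a}
\]
for all $j_1 \neq j_2$ and all $a \neq b$ in $[\colors]$ (which is realizable as a pair of coordinates of some $u$ since every $\hamlevel_i \geq 1$). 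Hence $\delta_{j_1, a} - \delta_{j_2, a}$ is independent of $a$; summing over $a \in [\colors]$ and invoking the row-sum condition forces this constant to be zero. So $\delta_{j,a}$ depends only on $a$, and the column-sum condition then forces $\delta_{j,a} = 0$.

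The main obstacle is ensuring the two corrections in the existence step are compatible---naively, the column-sum fix could reintroduce row-sum errors. The accounting identity $\sum_i D_i = \sum_{j,i} c_{ji} = 0$, which falls out of the first normalization, is precisely what rescues this by forcing $\sum_i \mu_i = 0$. An alternative uniqueness proof is a dimension count (both the space of polynomial representations modulo the multislice relations and the normal-form space have dimension $(n-1)(\colors-1) + 1$), but the symmetry argument above is cleaner and more informative.
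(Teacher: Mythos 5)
Your proof is correct, and both halves run along the same underlying lines as the paper's, with some pleasant variation in the details. For existence, you perform a direct two-step centering of the coefficient matrix (subtract row means using $\sum_i x_{ji}=1$, then column means using $\sum_j x_{ji}=\hamlevel_i$), and you correctly spot the one nontrivial point: after the first normalization the total sum $\sum_{j,i} c_{ji}$ vanishes, so the column correction is trace-free and does not disturb the row sums. The paper instead reduces by linearity to the single monomial $x_{11}$ and writes down an explicit normal-form expansion for it; your centering argument is somewhat more transparent about \emph{why} a normal form exists, while the paper's computation is more concrete. For uniqueness, both proofs subtract the representation at a point $u$ and at its transposed image $u^{(j_1\,j_2)}$ to obtain the rectangle identity $\delta_{j_1,a}-\delta_{j_1,b}-\delta_{j_2,a}+\delta_{j_2,b}=0$. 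You first kill the constant $\delta$ by taking expectations and the column-sum condition, then sum the rectangle identity over the colour index $a$ and invoke the row-sum condition, and finally use the column-sum condition again; the paper averages over the coordinate index $j_2$ and uses the column-sum condition, then the row-sum condition, and deals with the constant last. These are dual orderings of the same symmetry argument, and both are valid.
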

\begin{proof}
We start by showing that if $f$ has degree one then it has a representation as required by the lemma. By linearity, it suffices to show this for the function $x_{11}$ (the lemma clearly holds for constant functions). Since $\sum_{i=1}^\colors x_{1i} = 1$, we have
\[
 x_{11} = \frac{1}{\colors} \sum_{i=1}^\colors (x_{11} - x_{1i}) + \frac{1}{\colors}.
\]
Similarly, since $\sum_{j=1}^n x_{ji} = \hamlevel_i$, we have
\[
 x_{1i} = \frac{1}{n} \sum_{j=1}^n (x_{1i} - x_{ji}) + \frac{\hamlevel_i}{n}.
\]
Combining both expressions together, we obtain
\begin{align*}
 x_{11} &= \frac{1}{\colors n} \sum_{i=1}^\colors \sum_{j=1}^n (x_{11} - x_{j1} - x_{1i} + x_{ji}) + \frac{1}{\colors n} \sum_{i=1}^\colors (\hamlevel_1 - \hamlevel_i) + \frac{1}{\colors} \\ &=
 \frac{1}{\colors n} \sum_{i=1}^\colors \sum_{j=1}^n (x_{11} - x_{j1} - x_{1i} + x_{ji}) + \frac{\hamlevel_1}{n}.
\end{align*}
It is not hard to check that $x_{11} - x_{j1} - x_{1i} + x_{ji}$ satisfies the requisite properties for all $j \in [n]$ and $i \in [\colors]$, hence so does the expression given for $x_{11}$.

Next, we show that the representation is unique. It suffices to show that the only representation of the zero function is the zero polynomial. In other words, we have to show that if
\[
 0 = c + \sum_{j=1}^n \sum_{i=1}^\colors c_{ji} x_{ji},
\]
where the $c_{ji}$ satisfy all the constraints in the lemma, then $c = 0$ and $c_{ji} = 0$ for all $j \in [n]$ and $i \in [\colors]$.

Choose any two indices $j_1 \neq j_2$ and any two colors $i_1 \neq i_2$. Consider an arbitrary point $u$ in the multislice satisfying $u_{j_1} = i_1$ and $u_{j_2} = i_2$, and the point obtained by switching $i_1$ and $i_2$. Subtracting the corresponding right-hand sides, we deduce
\[
 0 = c_{j_1i_1} - c_{j_1i_2} - c_{j_2i_1} + c_{j_2i_2}.
\]
This identity also holds when $j_1 = j_2$.
Averaging over all values of $j_2$ and using $\sum_{j=1}^n c_{ji_1} = \sum_{j=1}^n c_{ji_2} = 0$, we deduce that $c_{j_1i_1} = c_{j_1i_2}$ for all $i_1 \neq i_2$. Since $\sum_{i=1}^\colors c_{j_1i} = 0$, this implies that $c_{j_1i} = 0$ for all $i \in [\colors]$ and all $j_1 \in [n]$. It follows that also $c = 0$, and so the only representation of zero is the zero polynomial, completing the proof of uniqueness.
\end{proof}

\section{FKN theorem} \label{sec:fkn}

In this section we prove \Cref{thm:main}, by induction on the number of colours. The actual statement that we will prove by induction is the following.

\begin{theorem} \label{thm:main-proof}
Fix an integer $\colors \geq 2$ and a parameter $\rho > 0$. There exists a constant $N = N(\colors,\rho)$ such that for every $\rho$-balanced multislice on $n \geq N$ points and $\colors$ colours, the following holds.

If $f\colon \slice{\hamlevel} \to \R$ is a degree~1 function which satisfies $\E[\dist(f,\{0,1\})^2] = \epsilon$, then there exists a Boolean dictator $g$ such that $\E[(f-g)^2] = O_{\colors,\rho}(\epsilon)$.
\end{theorem}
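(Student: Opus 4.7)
The plan is to prove \Cref{thm:main-proof} by induction on the number of colours $\colors$. The base case $\colors = 2$ is the FKN theorem for the slice, established in \cite{Fil16b}. For the inductive step, given a degree-1 function $f$ on a $\rho$-balanced $\colors$-coloured multislice with $\E[\dist(f,\{0,1\})^2] = \epsilon$, the natural move is to peel off one colour by conditioning on the positions carrying colour $\colors$. For each placement $\sigma$ (a subset $T_\sigma \subseteq [n]$ of size $\hamlevel_\colors$), the restricted function $f_\sigma$ is a degree-1 function on the $(\colors-1)$-coloured multislice $\slice{(\hamlevel_1,\ldots,\hamlevel_{\colors-1})}$ indexed by the positions $[n] \setminus T_\sigma$. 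This sub-multislice is still $\rho$-balanced (since $\hamlevel_i \geq \rho n \geq \rho(n-\hamlevel_\colors)$) and has $n-\hamlevel_\colors \geq (\colors-1)\rho n$ points, so by choosing $N(\colors,\rho)$ large enough it exceeds the inductive threshold $N(\colors-1,\rho)$.

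By the tower property, $\E_\sigma[\E[\dist(f_\sigma,\{0,1\})^2]] = \epsilon$, so Markov yields a collection $\Sigma$ of placements of density at least $1/2$ on which $\E[\dist(f_\sigma,\{0,1\})^2] \leq 2\epsilon$. For each $\sigma \in \Sigma$ the inductive hypothesis provides a Boolean dictator $g_\sigma$ on the sub-multislice, of the form $u \mapsto 1_{u_{j_\sigma} \in S_\sigma}$ with $j_\sigma \in [n] \setminus T_\sigma$ and $S_\sigma \subseteq [\colors-1]$ (or a constant), such that $\E[(f_\sigma-g_\sigma)^2] = O_{\colors-1,\rho}(\epsilon)$. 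The goal is then to glue these local dictators into a single global Boolean dictator $g(u) = 1_{u_{j^*} \in S^*}$, and conclude $\E[(f-g)^2] = O_{\colors,\rho}(\epsilon)$ by chaining $f \to f_\sigma \to g_\sigma \to g$ via the L2 triangle inequality, averaged over a sufficiently large subcollection of $\sigma$.

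The gluing step is the main obstacle: one must show that the pairs $(j_\sigma,S_\sigma)$ align for a constant fraction of $\sigma \in \Sigma$. The cleanest route is through the unique normal-form representation of \Cref{lem:normal-form-1}. Writing $f = c + \sum_{j,i} c_{ji} x_{ji}$ with $\sum_i c_{ji} = 0$ and $\sum_j c_{ji} = 0$, the coefficients of $f_\sigma$ on the sub-multislice are obtained by restricting the $c_{ji}$ to $j \notin T_\sigma$ and $i < \colors$, so the inductive conclusion that each $f_\sigma$ is approximated by a coordinate dictator forces the row $(c_{j_\sigma i})_{i<\colors}$ to dominate over the other rows on average over $\sigma$. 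A voting/pigeonhole argument then isolates a single coordinate $j^*$ and a single colour-set $S^* \subseteq [\colors]$ (resolving the residual ambiguity of whether $\colors \in S^*$ by comparing to $\E[f]$), and the rigidity of the normal-form constraints — which tie together the row and column sums of the coefficient matrix — promotes this local agreement to the sought global closeness bound, with constants depending only on $\colors$ and $\rho$.
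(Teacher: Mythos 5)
Your high-level skeleton (induction on the number of colours, slice FKN as the base case, condition on the positions of one colour, appeal to the inductive hypothesis on the restricted functions, then glue) matches the paper's strategy in broad outline, but the order of steps is different in a way that creates a genuine gap, and the gap sits exactly at the step you flag as "the main obstacle" and then wave past.

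You propose to apply the inductive hypothesis to $f_\sigma$ first and obtain, for each good placement $\sigma$, a Boolean dictator $g_\sigma$, and then to align the pairs $(j_\sigma,S_\sigma)$ by a "voting/pigeonhole argument." That alignment is not a routine afterthought: a priori the dictators $g_\sigma$ can live on different coordinates and different colour-sets, and the inductive hypothesis says nothing about which. Worse, when the true dictatorial coordinate $j^*$ lies inside $T_\sigma$, the coordinate $j^*$ is not even available to $g_\sigma$ — so for a constant fraction of $\sigma$ the correct $g_\sigma$ is a \emph{constant}, and the "dominant row" picture you describe does not hold uniformly. Your sketch ("the inductive conclusion forces the row $(c_{j_\sigma i})_i$ to dominate ... a voting/pigeonhole argument then isolates $j^*$ ... the rigidity of the normal-form constraints promotes local agreement to global closeness") asserts the existence of an argument rather than giving one, and in fact establishing that alignment is the whole difficulty of the theorem, not a cleanup step.

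The paper sidesteps this by reversing the order. It first isolates the dictatorial coordinate \emph{without} any appeal to the inductive hypothesis: writing $f = c + \sum_{j,i} c_{ji} x_{ji}$ (eliminating the last colour), the random differences $f(\bu)-f(\bu^{(j_1\,j_2)}) = c_{j_1 i} - c_{j_2 i}$ land near $\{0,\pm1\}$ with good probability, so each coefficient column $(c_{ji})_j$ is $O(\sqrt{\epsilon})$-close to a two-valued set $\{c_i, c_i+1\}$; a pigeonhole over the resulting $\{0,1\}^\colors$-labels of coordinates, together with a swap argument involving two exceptional coordinates at once, shows at most one coordinate can deviate from the majority label. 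Only \emph{then} does the paper invoke induction — and crucially in the form of \Cref{lem:main-constant}, applied to restrictions $f_{I,S}$ obtained by fixing the exceptional coordinate to colour $I$ and fixing the remaining positions of the last colour to lie in $S$. These restrictions have all pairwise influences $O(\epsilon)$ precisely because the exceptional coordinate has been frozen, so \Cref{lem:main-constant} forces each $f_{I,S}$ to be close to a \emph{constant} $C_{I,S}\in\{0,1\}$. Gluing constants is easy: one only has to show $C_{I,\bS}$ is concentrated, which is done by viewing $\E[f_{I,\bS}]$ as a degree-one function on a two-colour slice and applying \Cref{lem:main-constant} once more. So the paper's restriction-and-induction step never has to reconcile dictators on mismatched coordinates. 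To repair your proposal you would either need to carry out the coordinate-isolation argument up front as the paper does (after which your conditioning gives constants, not arbitrary dictators), or else supply a genuine proof of the alignment claim, which I do not see how to do cheaply from the inductive hypothesis alone.
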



\Cref{thm:main} follows from this formulation using the following argument.
\begin{proof}[Proof of \Cref{thm:main}]
We start with the first part of the theorem. Let $f\colon \slice{\hamlevel} \to \R$ be a degree one function which satisfies $\E[\dist(f,\{0,1\})^2] = \epsilon$. \Cref{thm:main-proof} shows that there exists a Boolean dictator $g$ such that $\E[(f-g)^2] = O(\epsilon)$. Let $h = f - g$. Since $g$ is Boolean, $\E[\dist(h,\{0,\pm1\})^2] \leq \epsilon$. When $|h| \leq 1/2$, we have $\dist(h,\{0,\pm1\})^2 = h^2$, and so
\[
 \epsilon \geq \E[\dist(h,\{0,\pm1\})^2] \geq \E[h^2 1_{|h| \leq 1/2}] = \E[h^2] - \E[h^2 1_{|h|>1/2}].
\]
When $|h| > 1/2$, we have $h^4 > h^2/4$, and so
\[
 \E[h^2 1_{|h| > 1/2}] < 4 \E[h^4].
\]
In~\cite{FOW18a} it is shown that a $\rho$-biased multislice is hypercontractive for any constant $\rho$ and constant number of colours, and so $\E[h^4] = O_{\colors,\rho}(\E[h^2]^2) = O_{\colors,\rho}(\epsilon^2)$, since $h$ has degree one. This shows that $\E[h^2] < \epsilon + 4\E[h^4] = \epsilon + O_{\colors,\rho}(\epsilon^2)$, completing the proof of the first part of the theorem.

We continue with the second part of the theorem, which is very similar. Let $F\colon \slice{\hamlevel} \to \R$ be a Boolean function which satisfies $\|F^{>1}\|^2 = \epsilon$, and let $f = F^{\leq 1}$. Since $\E[\dist(f,\{0,1\})^2] \leq \E[(f-F)^2] = \epsilon$, the first part gives a Boolean dictator $g$ satisfying $\E[(f-g)^2] = \epsilon + O_{\colors,\rho}(\epsilon^2)$. The L2 triangle inequality implies that $\E[(F-g)^2] \leq 4\epsilon + O_{\colors,\rho}(\epsilon^2)$. Since both $F$ and $g$ are Boolean, $\Pr[F \neq g] = \E[(F-g)^2]$, completing the proof.
\end{proof}

For brevity, in the rest of the section we use $O(\cdot)$ for $O_{\colors,\rho}(\cdot)$.

\subsection{Base case}

The base case of our inductive proof is when $\colors = 2$, and it follows from the main result of~\cite{Fil16b}, whose statement reads as follows.

\begin{theorem} \label{thm:fkn-slice}
Suppose that $f\colon \slice{k,n-k} \to \{0,1\}$ satisfies $\|f^{>1}\|^2 = \epsilon$, where $2 \leq k \leq n/2$. Then either $f$ or $1-f$ is $O(\epsilon)$-close to a function of the form $\max_{i \in S} x_{i1}$, where $S \subseteq [n]$ has cardinality at most $\max(1,O(\sqrt{\epsilon}/(k/n)))$.
\end{theorem}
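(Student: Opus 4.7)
The plan is to analyze the degree-$1$ projection $g = f^{\leq 1}$, which satisfies $\|f - g\|^2 = \epsilon$. By \Cref{lem:normal-form-1} specialized to $\colors = 2$, I can write $g = c + \sum_{i=1}^n c_i x_{i1}$ with $\sum_i c_i = 0$; since $f$ is Boolean, this gives $\E[\dist(g,\{0,1\})^2] \leq \epsilon$. Replacing $f$ by $1 - f$ if needed, I may assume $\E[f] \leq 1/2$, so $c = \E[g] \leq 1/2 + O(\sqrt{\epsilon})$.

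The central step is to show that the $n$ coefficients $c_i$ essentially take only two distinct values. For any pair $i \neq j$ and any input $u$ with $u_i = 1$, $u_j = 0$, swapping these two coordinates changes $g$ by exactly $c_j - c_i$, while $f$ almost always changes by $0$ or $\pm 1$, forcing $c_i - c_j$ to lie near $\{-1, 0, 1\}$ on a typical pair. A quantitative version---obtained by applying slice hypercontractivity from~\cite{FOW18a} to the degree-$2$ polynomial $g(1-g)$, whose small $L_2$ norm then controls the $L_4$ contribution of ``bad'' pairs---yields a threshold $\tau$ such that, setting $S = \{i : c_i > \tau\}$, typical $i \in S$ satisfy $c_i = \alpha + O(\sqrt{\epsilon})$ and typical $i \notin S$ satisfy $c_i = \beta + O(\sqrt{\epsilon})$, with $\alpha - \beta$ within $O(\sqrt{\epsilon})$ of $1$. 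Combined with $\sum_i c_i = 0$, this pins down $\alpha \approx (n - |S|)/n$ and $\beta \approx -|S|/n$.

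With the two-level structure in hand, I set $h = \max_{i \in S} x_{i1}$ and, for each input $u$, let $t = |\{i \in S : u_i = 1\}|$. When $t = 0$ one has $g(u) \approx c - k|S|/n$ and $h(u) = 0$; when $t = 1$, $g(u) \approx 1 + c - k|S|/n$ and $h(u) = 1$; when $t \geq 2$, $g(u) \approx t + c - k|S|/n$, which is far from $\{0,1\}$, so the bound $\E[\dist(g,\{0,1\})^2] \leq \epsilon$ forces $\Pr[t \geq 2] = O(\epsilon)$. On $\slice{k, n-k}$ this probability equals $\Omega(\min(1, (|S|k/n)^2))$, yielding $|S| = O(\sqrt{\epsilon}\, n/k)$ in the sparse regime $|S|k/n \ll 1$; in the dense regime the bound $|S| = O(n/k)$ follows from $\E[h] \leq \E[f] + O(\sqrt{\epsilon}) \leq 1/2 + O(\sqrt{\epsilon})$ together with $\E[h] = 1 - \binom{n-|S|}{k}/\binom{n}{k}$. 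A final triangle inequality combining $\|f - g\|^2 \leq \epsilon$ with $\|g - h\|^2 = O(\epsilon)$ (the latter from the case analysis above) yields $\Pr[f \neq h] = \|f - h\|^2 = O(\epsilon)$.

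The main obstacle is the clustering step in paragraph two: quantitatively proving that a degree-$1$ function on the slice close to Boolean has its coefficients essentially taking only two values, with $\sqrt{\epsilon}$-level precision. This is the point where the slice structure (with the constraints $\sum_i c_i = 0$ and $\sum_i x_i = k$) genuinely diverges from the Boolean cube, where the analogous argument would immediately give $|S| = 1$; it is also where slice hypercontractivity plays its decisive role, by promoting the $L_2$ closeness of $g$ to $\{0,1\}$ into $L_4$ control of the degree-$2$ object $g(1-g)$. Once the two-level structure is established, the case analysis on $t$ and the resulting bound on $|S|$ are routine.
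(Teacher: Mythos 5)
The paper does not prove Theorem~\ref{thm:fkn-slice} at all: it is imported verbatim as the main result of~\cite{Fil16b} and used as the base case of the induction, so there is no in-paper proof to compare your attempt against. Any review of your argument therefore has to be on its own merits.

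As a standalone attempt, your outline has the right skeleton (pass to $g = f^{\leq 1}$ in normal form, cluster the $c_i$ into two levels, read off $S$ from the high cluster, bound $|S|$ via $\Pr[t\geq 2]=O(\epsilon)$), but the step you yourself flag as the ``main obstacle'' is the whole theorem, and as sketched it would not close. Two concrete problems. First, the pairwise swap argument does not give $\dist(c_i-c_j,\{0,\pm1\})=O(\sqrt{\epsilon})$: conditioning on $u_i=1,u_j=0$ has probability $\Theta(k/n)$, so for a fixed pair one only gets $\dist(c_i-c_j,\{0,\pm1\})=O(\sqrt{\epsilon\,n/k})$, which is weaker precisely in the regime $k/n$ small where the theorem's conclusion is delicate. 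Second, and more seriously, pointwise control $c_i = \alpha+O(\sqrt{\epsilon})$ (resp.\ $\beta+O(\sqrt{\epsilon})$) is not sufficient to conclude $\|g-h\|^2=O(\epsilon)$. Writing $c_i=\alpha_i^\ast+\delta_i$ with $\alpha_i^\ast\in\{\alpha,\beta\}$, the discrepancy contributes a term $\sum_{i:u_i=1}\delta_i$ whose second moment on the slice is of order $(k/n)\sum_i\delta_i^2$; if each $|\delta_i|\asymp\sqrt{\epsilon}$ this is $\Theta(k\epsilon)$, far larger than $\epsilon$. You need an $\ell_2$ bound $\sum_i\delta_i^2=O(\epsilon\,n/k)$, not an $\ell_\infty$ bound, and the ``apply hypercontractivity to $g(1-g)$ to obtain a threshold $\tau$'' sentence is a placeholder, not a derivation of that bound. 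Until that quantitative clustering is actually proved, the case analysis on $t$ and the bound on $|S|$ rest on an unproved assumption.
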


From this theorem, we deduce the base case of \Cref{thm:main-proof}.

\begin{proof}[Proof of \Cref{thm:main-proof} in the case $\colors=2$]
Let $f\colon \slice{k,n-k} \to \R$ be a degree one function satisfying $\E[\dist(f,\{0,1\})^2] = \epsilon$, and assume without loss of generality that $k \leq n/2$. Let $F$ be the function obtained by rounding $f$ to $\{0,1\}$. By definition, $\E[(F-f)^2] = \epsilon$, and so $\|F^{>1}\|^2 \leq \epsilon$ (this is since $F^{\leq 1}$ is the degree one function which is closest to $F$).

By choosing $N$ appropriately, we can ensure that $k \geq 2$, and so \Cref{thm:fkn-slice} applies, showing that either $f$ or $1-f$ is $O(\epsilon)$-close to a function depending on at most $\max(1,m)$ coordinates, where $m = O(\sqrt{\epsilon}/(k/n)) = O_\rho(\sqrt{\epsilon})$.

We now consider two cases. The first case is when $m \leq 1$. In this case, $F$ is $O(\epsilon)$-close to a dictator. Since $\E[(F-f)^2] = \epsilon$, it follows that $f$ is also $O(\epsilon)$-close to the same dictator.

When $m > 1$, we can lower bound $\epsilon \geq e_\rho$ for some constant $e_\rho > 0$ depending on $\rho$. The L2 triangle inequality implies that
\[
 \E[f^2 1_{F=1}] \leq 2\E[(f-1)^2 1_{F=1}] + 2.
\]
Therefore
\[
 \epsilon = \E[\dist(f,\{0,1\})^2] =
 \E[f^2 1_{F=0}] + \E[(f-1)^2 1_{F=1}] \geq
 \E[f^2 1_{F=0}] + \frac{1}{2} \E[f^2 1_{F=1}] - 1 \geq
 \frac{1}{2} \E[f^2] - 1.
\]
In other words, $\E[f^2] \leq 2(1+\epsilon)$. This implies that
\[
 \|f-0\|^2 \leq \frac{2(1+\epsilon)}{\epsilon} \epsilon \leq \frac{2(1+e_\rho)}{e_\rho} \epsilon,
\]
completing the proof in this case.
\end{proof}

\subsection{Inductive step}

We now assume that \Cref{thm:main-proof} holds for a certain value of $\colors \geq 2$, and will prove it for $\colors + 1$.

We start with a simple comment: \Cref{thm:main-proof} is trivial for large $\epsilon$, using the same argument used to derive the second part of \Cref{thm:main}. Indeed, suppose that $\epsilon \geq \epsilon_0$. Then
\[
 \E[f^2 1_{|f|>1/2}] \leq 2\E[(f-1)^2 1_{|f|>1/2}] + 2,
\]
and so
\begin{multline*}
 \epsilon = \E[\dist(f,\{0,1\})^2] = \E[f^2 1_{|f| \leq 1/2}] + \E[(f-1)^2 1_{|f|>1/2}] \geq \\
 \E[f^2 1_{|f| \leq 1/2}] + \frac{1}{2} \E[f^2 1_{|f| > 1/2}] - 1 \geq \frac{1}{2} \E[f^2] - 1,
\end{multline*}
implying that $\E[f^2] \leq 2(1+\epsilon)$. Therefore
\[
 \E[(f-0)^2] \leq \frac{2(1+\epsilon)}{\epsilon} \epsilon \leq \frac{2(1+\epsilon_0)}{\epsilon_0} \epsilon.
\]
Since $0$ is a dictator, we see that when $\epsilon \geq \epsilon_0$, \Cref{thm:main-proof} trivially holds. Therefore, from now on we may assume that $\epsilon$ is small enough (as a function of $\colors$ and $\rho$).

Next, we need a criterion that guarantees that the approximating function in \Cref{thm:main-proof} is constant. We will use the concept of \emph{influence}: given two coordinates $j_1,j_2 \in [n]$ and a function $f\colon \slice{\hamlevel} \to \R$,
\[
 \Inf_{j_1j_2}[f] = \E_{\bu \sim \slice{\hamlevel}}\bigl[\bigl(f(\bu) - f(\bu^{(j_1\;j_2)})\bigr)^2\bigr].
\]

\begin{lemma} \label{lem:main-constant}
Let $\slice{\hamlevel}$ be a $\rho$-balanced multislice with $\colors$ colours. There exists a constant $\eta = \eta(\rho,\colors)$ such that the following holds for all $\epsilon \leq \eta$. If $f\colon \slice{\hamlevel} \to \R$ is a degree $1$ function which satisfies $\E[\dist(f,\{0,1\})^2] = \epsilon$ and $\Inf_{j_1j_2}[f] \leq \eta$ for all $j_1,j_2 \in [n]$ then there exists a constant $C \in \{0,1\}$ such that $\E[(f-C)^2] = O(\epsilon)$ and $|\E[f] - C| = O(\sqrt{\epsilon})$.
\end{lemma}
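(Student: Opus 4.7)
The plan is to reduce to the FKN theorem for the multislice (which, at the current number of colours, is supplied either by the base case \Cref{thm:fkn-slice} or by the inductive hypothesis of the outer induction on $\colors$). The idea is that rounding $f$ to a Boolean function yields a function whose high-degree part is small, to which FKN applies; the small-influence hypothesis is then used to argue that the approximating Boolean dictator must itself be constant.

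Concretely, I would let $F = \mathbf{1}_{f \geq 1/2}$ be the pointwise rounding of $f$ to the nearest Boolean value, so that $\E[(f-F)^2] = \E[\dist(f,\{0,1\})^2] = \epsilon$. Since $f$ is degree one we have $f^{>1} = 0$, and therefore $F^{>1} = (F - f)^{>1}$, giving $\|F^{>1}\|^2 \leq \|F-f\|^2 = \epsilon$. The second conclusion of \Cref{thm:main}, applied to $F$, then furnishes a Boolean dictator $g$ on $\slice{\hamlevel}$ with $\Pr[F \neq g] = O(\epsilon)$.

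I would then case-split on whether $g$ is constant. In the easy case $g \equiv C \in \{0,1\}$, two uses of the L2 triangle inequality give $\E[(f-C)^2] \leq 2\E[(f-F)^2] + 2\E[(F-C)^2] = 2\epsilon + 2\Pr[F \neq C] = O(\epsilon)$, and the companion bound $|\E[f] - C| \leq \sqrt{\E[(f-C)^2]} = O(\sqrt{\epsilon})$ is immediate. The remaining case is that $g(u) = \mathbf{1}_{u_{j_0}\in S}$ for some $j_0 \in [n]$ and proper non-empty $S \subsetneq [\colors]$. A direct calculation on the $\rho$-balanced multislice shows that for every $j \neq j_0$ we have $\Inf_{j_0 j}[g] = \Omega_\rho(1)$, since both $\sum_{i\in S}\hamlevel_i/n$ and its complement are bounded below by $\rho$. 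Two successive uses of the L2 triangle inequality, via the identity $\sqrt{\Inf_\btau[h]} = \|h - h^\btau\|$, transfer this lower bound first from $g$ to $F$ (at a cost of $O(\sqrt{\Pr[F \neq g]}) = O(\sqrt{\epsilon})$) and then from $F$ to $f$ (at a further cost of $O(\sqrt{\epsilon})$), producing $\Inf_{j_0 j}[f] = \Omega_\rho(1)$. This contradicts the lemma's hypothesis $\Inf_{j_0 j}[f] \leq \eta$ as soon as $\eta$ is chosen strictly below the implicit constant, so the non-constant case cannot occur.

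The main obstacle is essentially bookkeeping: calibrating $\eta$ and the smallness threshold for $\epsilon$ so that the $O(\sqrt{\epsilon})$ losses at the two triangle-inequality steps are comfortably dominated by the $\Omega(1)$ lower bound on the influences of a genuine dictator. A more conceptual worry is the logical scheduling within the outer induction on $\colors$: the above argument invokes FKN at the same number of colours as the lemma, so one must be sure the lemma is only applied at $\colors$-levels for which the corresponding FKN theorem has already been established.
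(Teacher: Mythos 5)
Your proof is correct and follows essentially the same strategy as the paper: invoke FKN (at the already-established colour count) to obtain a Boolean dictator $g$ approximating $f$, then use the small-influence hypothesis to rule out the non-constant case. The paper's version is slightly more direct: it applies \Cref{thm:main-proof} to the real-valued $f$ itself, obtaining $\E[(f-g)^2]=O(\epsilon)$ in one step, and then only needs a single L2-triangle-inequality transfer $\Inf_{j_1j_2}[g]=O(\Inf_{j_1j_2}[f]+\epsilon)$; your detour through the rounded function $F=\mathbf{1}_{f\geq 1/2}$ and \Cref{thm:main} (whose proof additionally requires hypercontractivity) introduces an extra intermediary and a second triangle-inequality step, but this is a cosmetic rather than substantive difference. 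Your concern about the logical scheduling is correctly resolved: \Cref{lem:main-constant} at $\colors$ colours is only invoked inside the inductive step for $\colors+1$, so \Cref{thm:main-proof} (and hence \Cref{thm:main}) at $\colors$ colours is already available; indeed, applying \Cref{thm:main-proof} directly to $f$, as the paper does, dispenses with the need to invoke \Cref{thm:main} and the hypercontractive refinement at all.
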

\begin{proof}
 \Cref{thm:main-proof} shows the existence of a Boolean dictator $g$ satisfying $\E[(f-g)^2] = O(\epsilon)$. The L2 triangle inequality shows that $\Inf_{j_1j_2}[g] = O(\Inf_{j_1j_2}[f] + \epsilon) = O(\eta)$. Suppose, for the sake of contradiction, that $g$ isn't constant. Then there exists a coordinate $j_1$ and colours $i_1,i_2$ such that $g(u) = 0$ if $u_{j_1} = i_1$ and $g(u) = 1$ if $u_{j_1} = i_2$. Let $j_2$ be any other coordinate. A random $\bu$ chosen from the multislice satisfies $\bu_{j_1} = i_1$ and $\bu_{j_2} = i_2$ with probability $\Omega(\rho^2)$. When that happens, $(g(\bu) - g(\bu^{(j_1\;j_2)}))^2 = 1$. Therefore $\Inf_{j_1j_2}[g] = \Omega(\rho^2)$. By choosing $\eta$ small enough, we reach a contradiction. We conclude that $g = C$ for some constant $C \in \{0,1\}$.
 
 The L1--L2 norm inequality implies that $\E[|f-C|]^2 \leq \E[(f-C)^2] = O(\epsilon)$, and so $|\E[f] - C| \leq \E[|f-C|] = O(\sqrt{\epsilon})$, completing the proof.
\end{proof}

\subsubsection{Isolating the dictatorial coordinate}

The first step in the argument is to identify the dictatorial coordinate, if any. We do this by looking at the degree one expansion of $f$:
\[
 f = c + \sum_{j=1}^n \sum_{i=1}^{\colors} c_{ji} x_{ji}.
\]
Note that although there are $\colors+1$ colours, using the identity $x_{j(\colors+1)} = 1 - \sum_{i=1}^\colors x_{ji}$ we can eliminate all variables involving the last colour.

Let $j_1 \neq j_2$ be two arbitrary coordinates, and let $i \neq \colors+1$ be an arbitrary color. Suppose that $u$ is an element of the multislice satisfying $u_{j_1} = i$ and $u_{j_2} = \colors+1$. A short calculation shows that
\[
 f(u) - f(u^{(j_1\;j_2)}) = c_{j_1i} - c_{j_2i}.
\]
When choosing $\bu$ at random from the multislice, the event $\bu_{j_1} = i$ and $\bu_{j_2} = \colors+1$ occurs with probability $\Omega(\rho^2)$. Therefore the L2 triangle inequality implies that
\[
 4\epsilon \geq \E[\dist(f(u) - f(u^{(j_1\;j_2)}), \{0,\pm 1\})^2] = \Omega(\rho^2 (c_{j_1i} - c_{j_2i})^2),
\]
implying that $\dist(c_{j_1i} - c_{j_2i}, \{0,\pm1\}) = O(\sqrt{\epsilon})$.
Choosing $c_i := \min_j c_{ji}$, we deduce that $c_{ji} \in \{c_i,c_i+1\} \pm O(\sqrt{\epsilon})$ for all $j \in [n]$.

We associate with each coordinate $j \in [n]$ a vector $\gamma_j \in \{0,1\}^\colors$ such that $|c_{ji} - c_i - \gamma_{ji}| = O(\sqrt{\epsilon})$. Assuming $n > 2^\colors$, there exists a vector $v \in \{0,1\}^\colors$ which is realized by at least two coordinates $j_1,j_2$. Our goal now is to show that $v$ is realized by all but at most one coordinate. To this end, let us assume that $\gamma_{J_1},\gamma_{J_2} \neq v$ for some coordinates $J_1 \neq J_2$. Let $i_1,i_2 \neq \colors+1$ be colours such that $\gamma_{J_1i_1} \neq v_{i_1}$ and $\gamma_{J_2i_2} \neq v_{i_2}$.

If an element $u$ of the multislice satisfies $\{u_{J_1},u_{j_1}\} = \{i_1,\colors+1\}$ then $f(u) - f(u^{(J_1\;j_1)}) = \pm 1 \pm O(\sqrt{\epsilon})$, and similarly for $J_2,j_2$. Hence we can find a constraint on $u_{J_1},u_{j_1},u_{J_2},u_{j_2}$ which implies $f(u) - f(u^{(J_1\;j_1)(J_2\;j_2)}) = 2 \pm O(\sqrt{\epsilon})$. For small enough $\epsilon$, this guarantees that $\dist(f(u) - f(u^{(J_1\;j_1)(J_2\;j_2)}), \{0,\pm1\})^2 \geq 1/2$. A random $\bu \sim \slice{\hamlevel}$ satisfies the constraint with probability $\Omega(\rho^4)$, and so
\[
 4\epsilon \geq \E[\dist(f(u) - f(u^{(J_1\;j_1)(J_2\;j_2)}), \{0,\pm 1\})^2] = \Omega(\rho^4),
\]
which is impossible if $\epsilon$ is small enough.

We conclude that $\gamma_j = v$ for all but at most a single coordinate. Without loss of generality, let the exceptional coordinate (if any) be the last coordinate.

\subsubsection{Constant pieces}

Our strategy now is to consider restrictions of $f$ obtained by fixing the value of the last coordinate. For large enough $n$, fixing the last coordinate to colour $I \in [\colors+1]$ will result in a function $f_I$ on a $(\rho/2)$-balanced multislice $\hamlevel^{(I)}$ on $n-1$ points and $\colors+1$ colours. Let $\epsilon_I = \E[\dist(f_I,\{0,1\})^2]$. We will show that \Cref{thm:main-proof} holds for each $f_I$, and later on put all pieces together. Just as above, we can assume that $\epsilon_I$ is small enough.

Let us start by noting that
\[
 f_I = c^{(I)} + \sum_{j=1}^{n-1} \sum_{i=1}^\colors c_{ji} x_{ji},
\]
where the coefficients $c_{ji}$ are the same as before. Suppose now that $S \subseteq [n-1]$ is a set of $\hamlevel^{(I)}_{\colors+1}$ coordinates, and let $S' = S \cup \{n\}$. Let $f_{I,S}$ be the function obtained by setting all coordinates in $S$ to the value $\colors+1$:
\[
 f_{I,S} = c^{(I)} + \sum_{j \notin S'} \sum_{i=1}^\colors c_{ji} x_{ji}.
\]
This is a function on a $(\rho/2)$-balanced multislice $\hamlevel^{(I,S)}$ on $\colors$ colours, so we can apply \Cref{thm:main-proof} or its corollary, \Cref{lem:main-constant}. In preparation for such an application, let us define $\epsilon_{I,S} = \E[\dist(f_{I,S},\{0,1\})^2]$.

By construction, for each $i \in [\colors]$ there exists a value $d_i \in \{c_i,c_i+1\}$ such that $|c_{ji} - d_i| = O(\sqrt{\epsilon})$ for all $j \in [n-1]$. This allows us to upper-bound $\Inf_{j_1j_2}[f_{i,S}]$ for all coordinates $j_1,j_2$. Indeed, if $u_{j_1} = i_1$ and $u_{j_2} = i_2$ then
\[
 |f_{I,S}(u) - f_{I,S}(u^{(j_1\;j_2)})| = 
 |c_{j_1i_1} + c_{j_2i_2} - c_{j_1i_2} - c_{j_2i_1}| = O(\sqrt{\epsilon}).
\]
This shows that $\Inf_{j_1j_2}[f_I] = O(\epsilon)$. For small enough $\epsilon$, this allows us to apply \Cref{lem:main-constant} in order to conclude that there is a constant $C_{I,S} \in \{0,1\}$ such that $\E[(f_{I,S} - C_{I,S})^2] = O(\epsilon_{I,S})$ and $|\E[f_{I,S}] - C_{I,S}| = O(\sqrt{\epsilon_{I,S}})$.

We apply the foregoing to a random choice $\bS$. The next step is to show that $C_{I,\bS}$ is concentrated. To this end, we calculate
\[
 \E[f_{I,\bS}] = c^{(I)} + \sum_{j \notin \bS\boldmath{'}} \sum_{i=1}^\colors c_{ji} \frac{\hamlevel^{(I)}_i}{m},
\]
where $m = \sum_{i=1}^\colors \hamlevel^{(I)}_i$. We can view $\E[f_{I,\bS}]$ as a function on the multislice $\slice{m,\hamlevel^{(I)}_{\colors+1}}$. Denoting it by $\mu$ and using a different parametrization of the slice, we have
\[
 \mu = c^{(I)} + \sum_{j=1}^{n-1} x_j \sum_{i=1}^\colors c_{ji} \frac{\hamlevel^{(I)}_i}{m}.
\]
This is a degree one function, and it satisfies
\[
 \E[\dist(\mu,\{0,1\})^2] \leq \E[(\mu - C_{I,\bS})^2] = O(\E[\epsilon_{I,\bS}]) = O(\epsilon_I).
\]
Furthermore, for each $j_1 \neq j_2$ we have
\[
 \Inf_{j_1j_2}[\mu] \leq \left( \sum_{i=1}^\colors (c_{j_1i} - c_{j_2i}) \frac{\hamlevel^{(I)}_i}{m} \right)^2 = O(\epsilon).
\]
For small enough $\epsilon$, we can thus apply \Cref{lem:main-constant} (for two colours) to deduce that $\E[(\mu - C_I)^2] = O(\epsilon_I)$ for some constant $C_I \in \{0,1\}$.

Without loss of generality, let us suppose that $C_I = 0$. Then $\E[\mu^2] = O(\epsilon_I)$, and so $\Pr[\mu \geq 1/2] = O(\epsilon_I)$. This shows that $C_{I,\bS} = 1$ with probability $O(\epsilon_I)$. Therefore
\begin{multline*}
 \E[f_I^2] = \E_{\bS} \bigl[\E[f_{I,\bS}^2]\bigr] =
 \E_{\bS}\bigl[\E[(f_{I,\bS} - C_{I,\bS})^2 1_{C_{I,\bS}=0}]\bigr] +
 \E_{\bS}\bigl[\E[(f_{I,\bS} - C_{I,\bS} + 1)^2 1_{C_{I,\bS}=1}]\bigr] \leq \\
 2\E_{\bS}\bigl[\E[(f_{I,\bS} - C_{I,\bS})^2]\bigr] + 2\Pr[C_{I,\bS} = 1] =
 O\bigl(\E_{\bS}[\epsilon_{I,\bS}]\bigr) + O(\epsilon_I) = O(\epsilon_I).
\end{multline*}

Taking also the case $C_I = 1$ into account, we deduce
\[
 \E[(f_I - C_I)^2] = O(\epsilon_I), \quad C_I \in \{0,1\}.
\]

\subsubsection{Completing the proof}

We can now complete the proof of \Cref{thm:main-proof}. Let $g(u) = C_{u_n}$, a Boolean dictator. Let $\bI$ be the marginal distribution of $\bu_n$ when $\bu \sim \slice{\hamlevel}$. Then
\[
 \E[(f-g)^2] = \E_{\bI}\bigl[\E[(f_{\bI} - C_{\bI})^2]\bigr] = O\bigl(\E_{\bI}[\epsilon_{\bI}]\bigr) = O(\epsilon).
\]
This completes the proof.

\section{Edge isoperimetry} \label{sec:isoperimetry}

Consider a multislice $\slice\hamlevel$ on at least $4$ points. Define the \emph{volume} of a subset $A$ of the multislice $\slice{\hamlevel}$ to be $\vol(A) = |A|/|\slice{\hamlevel}|$.
The goal of this section is to prove the following isoperimetric inequality: if $\vol(A) = \alpha$ then
\[
 \Phi(A) \geq \frac{2(1-\alpha)}{n-1}.
\]
We will also identify when this inequality is tight, and prove stability in these cases.

\subsection{Spectral formula} \label{sec:spectral-formula}

For a partition $\lambda \succeq \hamlevel$, let us denote by $f^{=\lambda}$ the orthogonal projection of $f$ to $V^{\lambda}$ (see \Cref{sec:spectral} for the appropriate definitions).
Frobenius~\cite{Fro00} proved the following formula:
\begin{equation} \label{eq:frobenius}
 \E_{\btau \sim \transpos{n}}[f^\btau] = \sum_{\lambda \succeq \hamlevel} c_\lambda f^{=\lambda}, \text{ where } c_\lambda = \frac{1}{n(n-1)} \sum_{i=1}^\colors [\lambda_i^2 - (2i-1)\lambda_i].
\end{equation}

A classical fact is that $c_\lambda > c_\mu$ if $\lambda \succ \mu$; see for example~\cite[Lem.~10]{DS81}. This allows us to identify the minimal values of $1 - c_\lambda$.

\begin{lemma} \label{lem:extreme-eigenvalues}
We have $1-c_{(n)} = 0$, $1-c_{(n-1,1)} = \frac{2}{n-1}$, and $1-c_\lambda \geq \frac{4}{n}$ for all $\lambda \neq (n),(n-1,1)$.
\end{lemma}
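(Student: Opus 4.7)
The plan is to evaluate $c_\lambda$ from \eqref{eq:frobenius} directly on the three distinguished partitions and then dispatch every other partition with a one-line dominance argument combined with the monotonicity $c_\lambda > c_\mu$ whenever $\lambda \succ \mu$, cited from \cite{DS81}.

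First I would plug $\lambda=(n)$ and $\lambda=(n-1,1)$ into \eqref{eq:frobenius}. For $\lambda=(n)$ only the $i=1$ term survives, yielding $c_{(n)} = (n^2-n)/(n(n-1)) = 1$. For $\lambda=(n-1,1)$ the first two terms contribute $(n-1)^2-(n-1)+1-3 = (n-1)(n-2)-2$, and a routine simplification gives
\[
 1-c_{(n-1,1)} = \frac{n(n-1) - (n-1)(n-2) + 2}{n(n-1)} = \frac{2(n-1)+2}{n(n-1)} = \frac{2}{n-1}.
\]

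Second, I would apply the same calculation to the anchor partition $(n-2,2)$. The relevant numerator is $(n-2)^2 - (n-2) + 4 - 6 = (n-2)(n-3) - 2$, and after simplification
\[
 1-c_{(n-2,2)} = \frac{n(n-1)-(n-2)(n-3)+2}{n(n-1)} = \frac{4(n-1)}{n(n-1)} = \frac{4}{n}.
\]

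Third, I would show that every partition $\lambda \neq (n),(n-1,1)$ of $n$ satisfies $\lambda \preceq (n-2,2)$ in the dominance order. The observation is that $\lambda_1 \geq n-1$ forces $\lambda \in \{(n),(n-1,1)\}$, since then the remaining parts sum to at most $1$. Excluding these two cases therefore yields $\lambda_1 \leq n-2 = (n-2,2)_1$; for all $k\geq 2$ the inequality $\lambda_1+\cdots+\lambda_k \leq n = (n-2,2)_1 + (n-2,2)_2$ is automatic. Hence $\lambda \preceq (n-2,2)$, and monotonicity gives $c_\lambda \leq c_{(n-2,2)}$, i.e., $1-c_\lambda \geq 4/n$.

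I do not foresee any genuine obstacle: the lemma is a mechanical computation paired with an elementary observation about the dominance order. The only care needed is the trivial case analysis on $\lambda_1$ that isolates the two exceptional partitions, and for that the single anchor $(n-2,2)$ suffices — no further anchors such as $(n-2,1,1)$ are required, since the only nontrivial majorization constraint imposed by $(n-2,2)$ concerns $\lambda_1$ alone.
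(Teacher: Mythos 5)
Your proposal is correct and matches the paper's proof in substance: compute $c_\lambda$ for the three distinguished partitions from \eqref{eq:frobenius}, then invoke the monotonicity $c_\lambda > c_\mu$ for $\lambda \succ \mu$ from~\cite{DS81}, together with the observation that $(n-2,2)$ majorizes every partition of $n$ other than $(n)$ and $(n-1,1)$. Your version is slightly more explicit than the paper's in spelling out why $(n-2,2)$ majorizes the rest (the $\lambda_1 \leq n-2$ case analysis), which is a worthwhile detail to include.
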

\begin{proof}
The largest three partitions in majorization order are $(n),(n-1,1),(n-2,2)$. Calculation shows that $c_{(n-2,2)} = 4/n$, and so the lemma follows from the observation that $c_\lambda > c_\mu$ if $\lambda \succ \mu$.
\end{proof}

The important formula of Frobenius allows us to deduce one for $\Phi(A)$.

\begin{lemma} \label{lem:expansion-formula}
For any $A \subseteq \slice{\hamlevel}$,
\[
 \Phi(A) = \frac{1}{\vol(A)} \sum_{\lambda \succeq \hamlevel} (1-c_\lambda) \|1_A^{=\lambda}\|^2.
\]
\end{lemma}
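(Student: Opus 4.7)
\medskip
\noindent\textbf{Proof proposal.} The plan is to rewrite $\Phi(A)$ as a normalized inner product involving the averaging operator $T f(u) := \E_{\btau \sim \transpos{n}}[f(u^\btau)]$ associated with a random transposition, and then expand $T 1_A$ via the Frobenius formula \eqref{eq:frobenius} applied to the isotypical decomposition. The operator $T$ is self-adjoint on $L^2(\slice{\hamlevel})$ because the uniform measure is $S_n$-invariant and the set of transpositions is closed under inversion, so beyond invoking Frobenius there is really only bookkeeping to do.

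The first step is to convert the outer expectation from $\bu \sim A$ (uniform over $A$) to $\bu \sim \slice{\hamlevel}$ (uniform over the multislice) weighted by $1_A$, which introduces the factor $1/\vol(A)$. This gives
\[
\Phi(A) = 1 - \E_{\bu \sim A,\,\btau}[1_A(\bu^\btau)] = 1 - \frac{1}{\vol(A)}\,\E_{\bu \sim \slice{\hamlevel}}[1_A(\bu)\, T 1_A(\bu)] = 1 - \frac{\langle 1_A, T 1_A\rangle}{\vol(A)},
\]
reducing the problem to computing $\langle 1_A, T 1_A\rangle$.

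Next I would plug in $T 1_A = \sum_{\lambda \succeq \hamlevel} c_\lambda \, 1_A^{=\lambda}$ from Frobenius's formula. Orthogonality of the isotypical decomposition yields $\langle 1_A, T 1_A\rangle = \sum_\lambda c_\lambda \|1_A^{=\lambda}\|^2$. Combining this with $\|1_A\|^2 = \E[1_A] = \vol(A)$ and Parseval's identity $\sum_\lambda \|1_A^{=\lambda}\|^2 = \|1_A\|^2$, I can rewrite the leading $1$ in the expression for $\Phi(A)$ as $\frac{1}{\vol(A)}\sum_\lambda \|1_A^{=\lambda}\|^2$. Collecting terms gives exactly the claimed identity.

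No step here presents a real obstacle: once Frobenius's formula is available and $T$ is recognized as self-adjoint, everything reduces to straightforward manipulation. The only points worth checking carefully are the change-of-measure factor $1/\vol(A)$ when passing from $\bu \sim A$ to $\bu \sim \slice{\hamlevel}$, and the matching use of Parseval to convert the standalone constant $1$ into a sum over $\lambda$, so that the $(1-c_\lambda)$ structure emerges cleanly.
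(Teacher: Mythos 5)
Your proposal is correct and takes essentially the same route as the paper: both change measure from $\bu \sim A$ to the uniform multislice (producing the $1/\vol(A)$ factor), expand $\E_{\btau}[1_A^\btau]$ via Frobenius's formula together with orthogonality of the isotypical decomposition, and then absorb the leading $1$ using $\vol(A) = \|1_A\|^2 = \sum_\lambda \|1_A^{=\lambda}\|^2$ so that the $(1-c_\lambda)$ weights appear. The self-adjointness of $T$ you mention is true but not actually used in the computation, since $T$ is only ever applied to $1_A$ on one side of the inner product.
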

\begin{proof}
For a given element $\bu \in A$ and a given transposition $\btau \in \transpos{n}$, the element $\bu^\btau$ lies in $A$ if $\E[1_{\bu^\btau} 1_A] = 1/|\slice\hamlevel|$, and otherwise $\E[1_{\bu^\btau} 1_A] = 0$. Hence
\[
\Pr_{\bu \sim A}[\bu^\btau \in A] =
\frac{|\slice\hamlevel|}{|A|} \E[1_A^\btau 1_A] = \frac{1}{\vol(A)} \langle 1_A^\btau, 1_A \rangle.
\]
Averaging over $\btau$, we get
\[
\Pr_{\substack{\bu \sim A \\ \btau \sim \transpos{n}}}[\bu^\btau \in A] =
\frac{1}{\vol(A)} \left\langle \E_{\btau \sim \transpos{n}}[1_A^\btau], 1_A \right\rangle.
\]
Applying \Cref{eq:frobenius} and the orthogonality of the isotypical decomposition, we obtain
\[
\Pr_{\substack{\bu \sim A \\ \btau \sim \transpos{n}}}[\bu^\btau \in A] =
\frac{1}{\vol(A)} \sum_{\lambda \succeq \hamlevel} \langle c_\lambda 1_A^{=\lambda}, 1_A^{=\lambda} \rangle = \frac{1}{\vol(A)} \sum_{\lambda \succeq \hamlevel} c_\lambda \|1_A^{=\lambda}\|^2.
\]
The lemma now follows from the identity $\|1_A\|^2 = \sum_\lambda \|1_A^{=\lambda}\|^2$.
\end{proof}

\subsection{Main argument} \label{sec:hoffman}

\Cref{lem:expansion-formula} implies an isoperimetric inequality, along the lines of Hoffman's bound.

\begin{lemma} \label{lem:isoperimetric}
If $A \subseteq \slice{\hamlevel}$ and $\vol(A) = \alpha$ then
\[
 \Phi(A) \geq \frac{2(1 - \alpha)}{n-1}.
\]
Furthermore, if $\hamlevel_1,\ldots,\hamlevel_\colors \geq 2$ and the bound is tight then $A$ is a dictator (membership in $A$ depends on the colour of a single coordinate).

Suppose now that the number of colours is bounded, and that the multislice is $\rho$-balanced for some constant $\rho$.
If $\Phi(A) \leq (1+\epsilon)\frac{2(1-\alpha)}{n-1}$ (where $\epsilon>0$) then there exists a Boolean dictator $B$ such that
\[
 \frac{|A \triangle B|}{|\slice{\hamlevel}|} = O(\alpha(1-\alpha)\epsilon).
\]
\end{lemma}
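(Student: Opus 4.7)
The plan is to run a Hoffman-style argument based on the spectral formula of \Cref{lem:expansion-formula} together with the eigenvalue bounds of \Cref{lem:extreme-eigenvalues}, and then feed the resulting bound on $\|1_A^{>1}\|^2$ into the FKN theorem (\Cref{thm:main}) to obtain the stability statement.

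For the isoperimetric inequality itself, I would use that $1-c_{(n)}=0$ while $1-c_\lambda \geq 2/(n-1)$ for every other $\lambda \succeq \hamlevel$, giving
\[
 \alpha\,\Phi(A) = \sum_{\lambda \neq (n)} (1-c_\lambda) \|1_A^{=\lambda}\|^2 \geq \frac{2}{n-1}\bigl(\|1_A\|^2 - \alpha^2\bigr) = \frac{2\alpha(1-\alpha)}{n-1}.
\]
Equality forces $\|1_A^{=\lambda}\|^2 = 0$ for every $\lambda \neq (n),(n-1,1)$, so $1_A$ is a Boolean degree-$1$ function. From here a direct argument using the normal form of \Cref{lem:normal-form-1} (or \Cref{thm:main} with $\epsilon = 0$ when applicable) shows that $1_A$ is a dictator; the hypothesis $\hamlevel_i \geq 2$ is exactly what makes every two-element swap of colours realisable, which in turn forces the coefficients of the normal form to describe a single-coordinate function.

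For the stability claim, I would split the spectral sum so as to exploit the strictly larger gap $1-c_\lambda \geq 4/n$ available for all $\lambda \neq (n),(n-1,1)$:
\[
 \alpha\,\Phi(A) \geq \frac{2}{n-1}\|1_A^{=1}\|^2 + \frac{4}{n}\|1_A^{>1}\|^2.
\]
Substituting $\|1_A^{=1}\|^2 = \alpha(1-\alpha) - \|1_A^{>1}\|^2$ and comparing with the hypothesis $\alpha\,\Phi(A) \leq (1+\epsilon)\cdot 2\alpha(1-\alpha)/(n-1)$ yields
\[
 \left(\frac{4}{n} - \frac{2}{n-1}\right)\|1_A^{>1}\|^2 \leq \frac{2\epsilon\,\alpha(1-\alpha)}{n-1}.
\]
Since $4/n - 2/(n-1) = 2(n-2)/[n(n-1)] = \Theta(1/n)$, this gives $\|1_A^{>1}\|^2 = O(\alpha(1-\alpha)\epsilon)$.

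Finally, invoking the second part of \Cref{thm:main} on $1_A$ with this bound produces a Boolean dictator, which we identify with $1_B$ for a dictator set $B$, satisfying $\Pr[1_A \neq 1_B] \leq 4\|1_A^{>1}\|^2 + O(\|1_A^{>1}\|^4) = O(\alpha(1-\alpha)\epsilon)$, and this probability equals $|A \triangle B|/|\slice{\hamlevel}|$. I do not expect a substantial obstacle: the argument is essentially bookkeeping once the spectral formula and FKN are both in hand. The most delicate point is choosing the correct splitting of the spectral sum—isolating the $V^{(n-1,1)}$ contribution from everything else—so that the resulting bound on $\|1_A^{>1}\|^2$ retains the factor $\alpha(1-\alpha)$ rather than degenerating to a weaker bound linear in $\alpha$ alone.
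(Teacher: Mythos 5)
Your argument is essentially the same as the paper's: both obtain the inequality from \Cref{lem:expansion-formula} and \Cref{lem:extreme-eigenvalues}, both isolate the $(n-1,1)$ contribution from the rest (your $\|1_A^{>1}\|^2$ is the paper's $\delta$), and both arrive at the same algebraic bound $\|1_A^{>1}\|^2 = O(\alpha(1-\alpha)\epsilon)$ before invoking \Cref{thm:main}. Two points you gloss over, though. First, \Cref{thm:main} applies only when $n \geq N(\colors,\rho)$; the paper handles the finitely many smaller $n$ by a compactness argument (for bounded $n$, any $A$ is trivially $O(\delta)$-close to $\emptyset$). You never mention this, and applying \Cref{thm:main} with $\epsilon=0$ for the tightness characterization is especially problematic since that case has no largeness assumption on $n$. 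Second, the assertion that a Boolean degree-$1$ function on a multislice with $\hamlevel_i \geq 2$ must be a dictator is itself a nontrivial theorem, which the paper attributes to \cite{FI18a}; your sketch "a direct argument using the normal form ... shows that $1_A$ is a dictator" waves at the right ingredients (two-element colour swaps being realisable) but does not constitute a proof, and it is worth recognising that you are either invoking or re-proving a cited result rather than doing routine bookkeeping.
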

\begin{proof}
 Since $1_A^{=(n)} = \E[1_A] \bone = \vol(A) \bone$, it follows that $\|1_A^{=(n)}\|^2 = \vol(A)^2$. Similarly, $\sum_{\lambda \succeq \hamlevel} \|1_A^{=\lambda}\|^2 = \|1_A\|^2 = \E[1_A] = \vol(A)$. Hence combining \Cref{lem:expansion-formula} and \Cref{lem:extreme-eigenvalues}, we have
\[
 \Phi(A) \geq \frac{1}{\vol(A)} \cdot \frac{2}{n-1} (\vol(A) - \vol(A)^2) = \frac{2(1 - \vol(A))}{n-1}.
\]
This proves the upper bound. If the upper bound is tight, then $1_A$ is supported on $(n),(n-1,1)$, and so $1_A$ has degree one. This implies~\cite{FI18a} that $A$ is a dictator.

Suppose now that the number of colours is bounded, that the multislice is $\rho$-balanced, and that $\Phi(A) \leq (1+\epsilon)\frac{2(1-\alpha)}{n-1}$. Let $\delta = \|1_A\|^2 - \|1_A^{=(n)}\|^2 - \|1_A^{=(n-1,1)}\|^2$. Then
\begin{align*}
 \Phi(A) &\geq \frac{1}{\vol(A)} \cdot
 \left[
  \frac{2}{n-1} (\vol(A) - \vol(A)^2) +
  \frac{2(n-2)}{n(n-1)} \delta
 \right] \\ &=
 \frac{2(1-\alpha)}{n-1} + \frac{2(n-2)}{n(n-1)} \frac{\delta}{\alpha}.
\end{align*}
The assumption on $\Phi(A)$ thus implies an upper bound on $\delta$:
\[
 \delta \leq \frac{n\alpha (1-\alpha)}{(n-2)} \epsilon = O(\alpha(1-\alpha)\epsilon).
\]
\Cref{thm:main} shows that if $n$ is larger than some constant depending on $\colors$ and $\rho$ then $1_A$ is $O(\delta)$-close to a Boolean dictator $1_B$, completing the proof. When $n$ is small, compactness shows that $1_A$ is trivially $O(\delta)$-close to $\emptyset$, since there are only finitely many possible $A,\alpha,\epsilon$.
\end{proof}

\begin{corollary} \label{cor:isoperimetric}
Suppose that $\alpha \in (0,1)$ satisfies $\alpha n = \sum_{i \in S} \hamlevel_i$ for some $S \subseteq [\colors]$. Then the bound in \Cref{lem:isoperimetric} is tight for the families
\[
 A_{j,S} = \{ u : u_j \in S \}, \quad j \in [n].
\]
Conversely, if the bound in \Cref{lem:isoperimetric} is tight for a family $A$ then there exists a set $S \subseteq [\colors]$ satisfying $\alpha n = \sum_{i \in S} \hamlevel_i$ and a coordinate $j \in [n]$ such that $A = A_{j,S}$.
\end{corollary}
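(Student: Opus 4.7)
The plan is to dispatch the two directions separately: a direct combinatorial calculation for the tightness claim, and an immediate appeal to the dictator conclusion of \Cref{lem:isoperimetric} for the converse.

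For tightness, I would decompose $\Phi(A_{j,S})$ according to whether the random transposition $\btau=(j_1\;j_2)$ touches the distinguished coordinate $j$. When $j \notin \{j_1,j_2\}$ we have $\bu^\btau_j = \bu_j \in S$, so $\bu^\btau$ remains in $A_{j,S}$. Otherwise, writing $\btau=(j\;j')$ with $j'$ uniform on $[n]\setminus\{j\}$, the event $\bu^\btau \notin A_{j,S}$ reduces to $\bu_{j'} \notin S$. Conditioning on $\bu_j = i \in S$, the remaining $n-1$ coordinates of $\bu$ are uniform on a multislice whose weight at colour $i$ is decreased by one, giving
\[
 \Pr[\bu_{j'} \notin S \mid \bu_j = i] = \sum_{i' \notin S} \frac{\hamlevel_{i'}}{n-1} = \frac{n(1-\alpha)}{n-1},
\]
independent of the choice of $i \in S$ (since the unit removed at colour $i \in S$ does not affect the sum over $i' \notin S$). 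Combined with $\Pr[j \in \btau] = 2/n$, this yields
\[
 \Phi(A_{j,S}) = \frac{2}{n} \cdot \frac{n(1-\alpha)}{n-1} = \frac{2(1-\alpha)}{n-1},
\]
exactly matching the bound in \Cref{lem:isoperimetric}.

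For the converse, if $A$ attains $\Phi(A) = \frac{2(1-\alpha)}{n-1}$, \Cref{lem:isoperimetric} asserts outright that $A$ is a dictator, so membership in $A$ depends only on a single coordinate $u_j$. Equivalently, $A = \{u : u_j \in S\} = A_{j,S}$ for some $S \subseteq [\colors]$. The volume identity $\alpha = \vol(A_{j,S}) = \frac{1}{n}\sum_{i \in S} \hamlevel_i$ then forces $\alpha n = \sum_{i \in S}\hamlevel_i$, as required.

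There is no serious obstacle here: most of the substance already sits in \Cref{lem:isoperimetric}, and the only mildly delicate point is observing that the conditional probability $\Pr[\bu_{j'} \notin S \mid \bu_j = i]$ does not depend on the particular $i \in S$, which is precisely what makes the tightness computation collapse into a single clean product.
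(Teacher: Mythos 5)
Your proposal is correct and matches the paper's argument in both directions. The only cosmetic difference is in the tightness computation: the paper observes directly that for \emph{every} fixed $\bu \in A_{j,S}$ there are exactly $(1-\alpha)n$ transpositions $(j\;k)$ (namely those with $\bu_k \notin S$) that move $\bu$ out of $A_{j,S}$, giving $\Phi(A_{j,S}) = (1-\alpha)n / \binom{n}{2}$ in one step; you instead condition on $j \in \btau$ and compute a conditional probability over the choice of $\bu$, arriving at the same product $\frac{2}{n}\cdot\frac{n(1-\alpha)}{n-1}$. Both are the same count, just bookkept differently, and the converse direction is word-for-word the paper's argument.
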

\begin{proof}
 The expansion of $A_{j,S}$ is the probability that a random transposition is of the form $(j\;k)$, where $k$ is one of the $(1-\alpha)n$ coordinates whose colour is not in $S$. Therefore
\[
 \Phi(A_{j,S}) = \frac{(1-\alpha) n}{\binom{n}{2}} = \frac{2(1-\alpha)}{n-1}.
\]
This shows that the bound in \Cref{lem:isoperimetric} is tight for $A_{j,S}$.

Conversely, if the bound in \Cref{lem:isoperimetric} is tight for a family $A$ then $A$ is a dictator, and so of the form $A_{j,S}$. Since $\vol(A_{j,S}) = \sum_{i \in S} \hamlevel_i/n$, we see that $\sum_{i \in S} \hamlevel_i = \alpha n$.
\end{proof}

\bibliographystyle{alpha}
\bibliography{odonnell-bib}

\end{document}